\theoremstyle{plain}
\theoremstyle{definition}
\newtheorem{theorem}{Theorem}[section]
\newtheorem{lemma}[theorem]{Lemma}
\newtheorem{example}[theorem]{Example}
\newtheorem{problem}[theorem]{Problem}
\newtheorem{note}[theorem]{Note}
\newtheorem{jadval}[theorem]{Table}
\theoremstyle{remark}
\numberwithin{equation}{section}
\newcommand{\SP}{\: \: \: \: \:}
\title[Set--theoretical entropies of Euler's totient function]{Set--theoretical entropies of Euler's totient function and other number theoretical special functions}
\author[F. Ayatollah Zadeh Shirazi, R. Yaghmaeian]{Fatemah Ayatollah Zadeh Shirazi, Reza Yaghmaeian}
\begin{document}
%%%%%%%%%%%%%%%%%%%%%%%%%%%%%%%%%%%% abstract
\begin{abstract}
In the following text we show set--theoretical entropy of
Euler's totient function and contravariant 
set--theoretical entropy of
Dedekind psi function are zero. Also 
contravariant set--theoretical entropy of
Euler's totient function and 
set--theoretical entropy of
Dedekind psi function are $+\infty$. We pay attention to some
of the other
number theoretical special functions too. We continue
our studies on Alexandroff topologies induced by 
Euler's totient function and Dedekind psi function.
\end{abstract}
\maketitle
%%%%%%%%%%%%%%%%%%%%%%%%%%%%%%%%%%%% MSC
\noindent {\small {\bf 2010 Mathematics Subject Classification:}  11A25, 11Y70, 33F99  \\
{\bf Keywords:}} Alexandroff topology, 
%Contravariant set--theoretical entropy, 
Dedekind psi function, Euler's totient function, 
%Functional Alexandroff topology,
Infinite orbit number, Infinite anti--orbit number, Set--theoretical entropy.
%%%%%%%%%%%%%%%%%%%%%%%%%%%%%%%%%%%%
%%%%%%%%%%%%%%%%%%%%%%%%%%%%%%%%%%%%
\section{Introduction}
\noindent Various types of entropies have been studied in different 
branches of mathematics. In category Set one may consider
set--theoretical and contravariant set--theoretical entropies
of self--maps. Our main aim in this text is to study 
set--theoretical behaviour 
of some well--known number theoretical maps like 
Euler's totient function, Dedekind psi function and
their generalizations. However 
set--theoretical and contravariant set--theoretical entropies of
a ``{\it nice}'' self--maps have interactions with 
infinite orbit number's concept and infinite anti--orbit
number's concept so we pay attention to these concepts too.
We continue our studies in topological
arising concepts in this regard, our main emphasis in topological
point of view deals with Alexandroff topological spaces' approach. 
\\
Let ${\mathbb N}=\{1,2,\ldots\}$ be the set of natural numbers
and ${\mathbb P}=\{2,3,5,7,\ldots\}$ the set of prime numbers.
For finite set $A$ by
$\sharp A$ we mean the number of elements of $A$. Also we say 
$\lambda:X\to X$ is finite fibre if $\lambda^{-1}(x)$
is finite for all $x\in X$.
%%%%%%%%%%%%%%%%%%%%%%%%%%%%%%%%%%%%
\\
{\bf Background on infinite orbit number and infinite anti--orbit
number of a self--map.}
For self--map $\lambda:X\to X$ we say
the one--to--one sequence $\{a_n\}_{n\geq1}$ is:
\\
$\bullet$ an infinite $\lambda-$orbit if for all $n\geq1$, $a_{n+1}=\lambda(a_n)$,
\\
$\bullet$ an infinite $\lambda-$anti--orbit if for all $n\geq1$, $a_n=\lambda(a_{n+1})$ (see e.g., \cite[Definition 1.2]{DGV} and \cite[Definition 1.1]{GV}).
\\
Moreover we set \cite{AKS}:
\begin{center}
${\mathfrak o}(\lambda):=\sup(\{n\geq1:$ there exists $n$ disjoint infinite $\lambda-$orbit sequences$\}\cup\{0\})$,
\\
${\mathfrak a}(\lambda):=\sup(\{n\geq1:$ there exists $n$ disjoint infinite $\lambda-$anti--orbit sequences$\}\cup\{0\})$,
\end{center}
we call ${\mathfrak o}(\lambda)$ infinite orbit number of $\lambda$ and
${\mathfrak a}(\lambda)$ infinite anti--orbit number of $\lambda$.
%%%%%%%%%%%%%%%%%%%%%%%%%%%%%%%%%%%%
\newpage
\noindent {\bf Background on set--theoretical and contravariant set--theoretical entropies.}
For $\lambda:X\to X$ and finite subset $A$ of $X$ the following
limit exists
\[{\rm ent}_{set}(\lambda,A)=\mathop{\lim}\limits_{n\to\infty}\dfrac{\sharp(A\cup\lambda(A)\cup\cdots\cup\lambda^{n-1}(A))}{n}\]
and we call ${\rm ent}_{set}(\lambda):=\sup\{{\rm ent}_{set}(\lambda,B):B$ is a finite subset of $X\}$ set--theoretical entropy of $\lambda$ \cite{AD}. 
Moreover for finite fibre onto map $\mu:X\to X$ and
finite sunset $A$ of $X$ the following
limit exists
\[{\rm ent}_{cset}(\mu,A)=\mathop{\lim}\limits_{n\to\infty}\dfrac{\sharp(A\cup\mu^{-1}(A)\cup\cdots\cup\mu^{-(n-1)}(A))}{n}\]
and we call ${\rm ent}_{cset}(\mu):=\sup\{{\rm ent}_{cset}(\mu,B):B$ is a finite subset of $X\}$ set--theoretical entropy of $\mu$.
On the other hand if $\lambda:X\to X$ is finite fibre and
$sc(\lambda):=\mathop{\bigcap}\limits_{n\geq1}\lambda^n(X)$,
then $\lambda\restriction_{sc(\lambda)}:sc(\lambda)\to sc(\lambda)$
is finite fibre and onto, we call 
${\rm ent}_{cset}(\lambda):={\rm ent}_{cset}
(\lambda\restriction_{sc(\lambda)})$ contravariant
set--theoretical entropy of $\lambda$ \cite{DG}. 
%%%%%%%%%%%%%%%%%%%%%%%%%%%%%%%%%%%%
\begin{note}
For $\lambda:X\to X$ we have ${\rm ent}_{set}(\lambda)={\mathfrak o}(\lambda)$ \cite[Proposition 2.16]{AD}, also for finite fibre
$\lambda:X\to X$ we have ${\rm ent}_{cset}(\lambda)={\mathfrak a}(\lambda)$ \cite[Theorems 3.2, 3.9]{DG}
\end{note}
%%%%%%%%%%%%%%%%%%%%%%%%%%%%%%%%%%%%
\noindent {\bf Some number theoretical special functions.}
Let's recall the following functions ($n\geq1$ and for convenient suppose all of them map
$1$ to $1$):
\\
$\bullet$ Jordan's totient function (for $k\geq1$): $J_k(n)=\{(s_1,\ldots,s_k):s_1,\ldots,s_k\in\{1,\ldots,n\},\gcd(s_1,\ldots,s_k,n)=1\}(n^k\prod\{1-\frac1{p^k}:p\in{\mathbb P},p|n\})$ 
(so well--known Euler's totient function $\varphi$ is $J_1$) 
(see e.g., \cite{TV})
\\
$\bullet$ Generalized Dedekind psi function (for $k\geq1$): $\psi_k(n)=n^k\prod\{1+\frac1{p^k}:p\in{\mathbb P},p|n\}=\frac{J_{2k}(n)}{J_k(n)}$ (so well--known Dedekind psi function, $\psi(=\psi_1)$ is $\frac{J_2}{J_1}$) \cite{S}
\\
$\bullet$ Unitary totient function: $\varphi^*(n)=\prod\{p^\alpha-1:p^\alpha|n,p^{\alpha+1}\not|n,p\in{\mathbb P},\alpha\geq1\}$ (see e.g., \cite{L, SA})
\\
$\bullet$ $\Omega(n)=\sum\{\alpha:p^\alpha|n,p^{\alpha+1}\not|n,p\in{\mathbb P}\}$ \cite{N}
\\
$\bullet$ $\omega(n)=\sum\{1:p^\alpha|n,p^{\alpha+1}\not|n,p\in{\mathbb P},\alpha\geq1\}$
\\
$\bullet$ $d_l(n)=\sharp\{(s_1,\cdots,s_l)\in{\mathbb N}^l:s_1\cdots s_l=n\}$ ($l\geq2$) (we denote $d_2$ with $d$) 
\cite{N}
\\
$\bullet$ $\sigma_l(n)=\mathop{\Sigma}\limits_{d|n,d\leq n}d^l$ ($l\geq2$) \cite{N}
%%%%%%%%%%%%%%%%%%%%%%%%%%%%%%%%%%%%
\section{Infinite orbit number and infinite anti--orbit number of $\varphi$}
\noindent In this section we compute 
infinite orbit number and infinite anti--orbit number of
Euler's totiont function, Dedekind psi function and some other
well--known maps.
%%%%%%%%%%%%%%%%%%%%%%%%%%%%%%%%%%%%
\begin{lemma}\label{taha70}
For $f:{\mathbb N}\to{\mathbb N}$ with $f(n)\leq n$ we have ${\mathfrak o}(f)=0$.
\end{lemma}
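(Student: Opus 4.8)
The plan is to show that no infinite $f$-orbit sequence can exist at all, which immediately forces ${\mathfrak o}(f)=\sup\{0\}=0$. Recall that an infinite $f$-orbit is a \emph{one-to-one} sequence $\{a_n\}_{n\geq 1}$ with $a_{n+1}=f(a_n)$ for every $n\geq 1$. So I would argue by contradiction: suppose such a sequence exists.

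From the hypothesis $f(n)\leq n$ for all $n\in{\mathbb N}$, we get $a_{n+1}=f(a_n)\leq a_n$ for every $n$, so the sequence $\{a_n\}_{n\geq 1}$ is (weakly) decreasing in ${\mathbb N}$. A weakly decreasing sequence of natural numbers is eventually constant: since $a_1\geq a_2\geq \cdots \geq 1$, the set $\{a_n:n\geq 1\}$ has a minimum, attained at some index $N$, and then $a_n=a_N$ for all $n\geq N$. But that contradicts the requirement that $\{a_n\}_{n\geq 1}$ be one-to-one (injective), since $a_N=a_{N+1}$. Hence no infinite $f$-orbit exists, the defining set in ${\mathfrak o}(f)=\sup(\{n\geq 1:\exists\, n \text{ disjoint infinite } f\text{-orbits}\}\cup\{0\})$ is empty, and therefore ${\mathfrak o}(f)=0$.

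There is essentially no obstacle here; the only thing to be careful about is the direction of the monotonicity and the precise definition of ``one-to-one sequence.'' One could phrase the finish slightly differently — e.g.\ observe directly that $\{a_n\}$ decreasing and injective would be an infinite strictly decreasing sequence of positive integers, which is impossible by well-ordering of ${\mathbb N}$ — but the contradiction with injectivity via eventual constancy is the cleanest. If one prefers, by the Note in the excerpt this also reads as ${\rm ent}_{set}(f)=0$, though the statement is purely about ${\mathfrak o}$ and needs nothing beyond the definition.
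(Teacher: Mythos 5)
Your proof is correct and follows essentially the same route as the paper's: both use $f(n)\leq n$ to see that the terms of a putative infinite $f$-orbit are non-increasing (the paper phrases this as all terms lying in the finite set $\{1,\ldots,a_1\}$), and then derive a contradiction with the sequence being one-to-one. No issues.
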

%%%%%%%%%%%%%%%%%%%%%%%%%%%%%%%%%%%%
\begin{proof}
Suppose $\{x_n\}_{n\geq1}$ is an infinite $f-$orbit
and for all $n\geq1$ we have $f(n)\leq n$, thus 
$x_1,x_2=f(x_1),x_3=f^2(x_1),\ldots\in\{1,2,\ldots,x_1\}$, thus $\{x_n\}_{n\geq1}$ is not infinite and one--to--one sequence.
\end{proof}
%%%%%%%%%%%%%%%%%%%%%%%%%%%%%%%%%%%%
\begin{lemma}\label{taha50}
For $f:{\mathbb N}\to{\mathbb N}$ with $f(n)\geq n$ we have ${\mathfrak a}(f)=0$.
\end{lemma}
%%%%%%%%%%%%%%%%%%%%%%%%%%%%%%%%%%%%
\begin{proof}
Suppose $\{x_n\}_{n\geq1}$ is an infinite $f-$anti--orbit
and for all $n\geq1$ we have $f(n)\geq n$, thus 
for all $n\geq1$ we have $x_1=f^n(x_{n+1})\geq f^{n-1}(x_{n+1})\geq\cdots\geq x_{n+1}$, so
$x_1,x_2,x_3,\ldots\in\{1,2,\ldots,x_1\}$
thus $\{x_m\}_{m\geq1}$ is not infinite and one--to--one.
\end{proof}
%%%%%%%%%%%%%%%%%%%%%%%%%%%%%%%%%%%%
%%%%%%%%%%%%%%%%%%%%%%%%%%%%%%%%%%%%
\begin{lemma}\label{taha60}
For $f:{\mathbb N}\to{\mathbb N}$ with $f(n)> n$ for
all $n>1$, we have ${\mathfrak o}(f)>0$.
\end{lemma}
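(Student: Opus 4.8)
The plan is to exhibit a single infinite $f$-orbit, since producing one disjoint infinite $f$-orbit sequence already forces ${\mathfrak o}(f)\geq 1>0$ by the definition of ${\mathfrak o}$. The natural candidate is the forward orbit of the smallest admissible starting point, namely $2$: set $x_1:=2$ and $x_{n+1}:=f(x_n)$ for $n\geq1$.

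First I would check that the whole sequence stays in the region where the hypothesis bites, i.e.\ that $x_n>1$ for all $n\geq1$. This is immediate by induction: $x_1=2>1$, and if $x_n>1$ then $x_{n+1}=f(x_n)>x_n>1$ using $f(m)>m$ for $m>1$. The same computation shows the sequence is strictly increasing, $x_1<x_2<x_3<\cdots$. A strictly increasing sequence of natural numbers is automatically one-to-one and infinite, so $\{x_n\}_{n\geq1}$ is a genuine infinite $f$-orbit, whence ${\mathfrak o}(f)\geq1$.

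There is essentially no obstacle here; the only point requiring a moment's care is that one must start the orbit at an integer $>1$ (so that $f(n)>n$ applies and the orbit never revisits the fixed behaviour at $1$), which is why $x_1=2$ is chosen rather than $x_1=1$. Note also that this argument in fact shows ${\mathfrak o}(f)\geq1$ for \emph{any} $f:{\mathbb N}\to{\mathbb N}$ admitting even one point $a$ with $f^k(a)>1$ for all $k\geq0$ and $f(m)>m$ on that orbit, so it meshes with Lemma~\ref{taha70}: for such an $f$ one need not have ${\mathfrak o}(f)=0$, and indeed when $f(n)>n$ for all $n>1$ one gets ${\mathfrak o}(f)\geq1$, with the exact value to be pinned down by counting disjoint such orbits in the sequel.
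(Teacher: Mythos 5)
Your proposal is correct and follows the same route as the paper: the paper's proof simply takes $x\geq2$ and observes that $\{f^n(x)\}_{n\geq1}$ is an infinite $f$-orbit, which is exactly your strictly-increasing-orbit argument with the induction spelled out. No differences worth noting.
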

%%%%%%%%%%%%%%%%%%%%%%%%%%%%%%%%%%%%
\begin{proof}
Let $x\geq2$, then $\{f^n(x)\}_{n\geq1}$ is an infinite 
$f-$orbit, thus 
${\mathfrak o}(f)>0$.
\end{proof}
%%%%%%%%%%%%%%%%%%%%%%%%%%%%%%%%%%%%
%%%%%%%%%%%%%%%%%%%%%%%%%%%%%%%%%%%%
\begin{lemma}\label{taha10}
For $k\geq1$ let $S_k:=\{2^k3^n\}_{n\geq1}$, then $S_1,S_2,\ldots$ are disjoint infinite $\varphi-$anti--orbit sequences, so ${\mathfrak a}(\varphi)=+\infty$. In addition for all $n\geq1$, $\varphi(n)\leq n$ so ${\mathfrak o}(\varphi)=0$.
\end{lemma}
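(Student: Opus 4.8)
The plan is to verify the anti--orbit property by a one--line multiplicativity computation, deduce disjointness from unique factorization, and read off the orbit claim from Lemma~\ref{taha70}.

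First I would fix $k\geq 1$, write $a_n:=2^k3^n$ for $n\geq 1$, and check that $\{a_n\}_{n\geq 1}$ is an infinite $\varphi-$anti--orbit. Since the powers $3^n$ are pairwise distinct, the sequence is one--to--one (and visibly infinite), so the only thing to prove is $a_n=\varphi(a_{n+1})$ for every $n\geq 1$. As $\gcd(2^k,3^{n+1})=1$ and $\varphi$ is multiplicative, $\varphi(2^k3^{n+1})=\varphi(2^k)\varphi(3^{n+1})=2^{k-1}(3^{n+1}-3^n)=2^{k-1}\cdot 2\cdot 3^n=2^k3^n=a_n$, as wanted; for $k=1$ this still holds since $\varphi(2)=1=2^0$.

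Next I would establish pairwise disjointness of $S_1,S_2,\ldots$: if $2^k3^n=2^m3^{n'}$ with $k\neq m$, unique factorization of integers forces $k=m$, a contradiction, so $S_k\cap S_m=\emptyset$ whenever $k\neq m$. Hence for every $N\geq 1$ the sequences $S_1,\ldots,S_N$ constitute $N$ pairwise disjoint infinite $\varphi-$anti--orbits, which by the definition of ${\mathfrak a}(\varphi)$ gives ${\mathfrak a}(\varphi)=+\infty$.

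Finally, since $\varphi(1)=1$ and $\varphi(n)<n$ for $n\geq 2$, we have $\varphi(n)\leq n$ for all $n\geq 1$, so Lemma~\ref{taha70} yields ${\mathfrak o}(\varphi)=0$. I expect no genuine obstacle here; the only points needing a little care are the index shift in the identity $a_n=\varphi(a_{n+1})$ and the fact that the definition of an anti--orbit requires the sequence itself (not merely its range) to be one--to--one, both of which are immediate for $S_k$.
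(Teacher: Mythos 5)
Your proof is correct and follows essentially the same route as the paper: the multiplicativity computation $\varphi(2^k3^{n+1})=2^{k-1}\cdot 3^n(3-1)=2^k3^n$, disjointness via unique factorization, and Lemma~\ref{taha70} for ${\mathfrak o}(\varphi)=0$. No substantive differences to report.
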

%%%%%%%%%%%%%%%%%%%%%%%%%%%%%%%%%%%%
\begin{proof}
For $n,k,s,t\geq1$ we have $\varphi(2^k3^{n+1})=2^{k-1}(2-1)3^k(3-1)=2^k3^n$ moreover $2^k3^n=2^s3^t$ if and only if $k=s$ and $n=t$.
\end{proof}
%%%%%%%%%%%%%%%%%%%%%%%%%%%%%%%%%%%%
\begin{lemma}\label{taha20}
For $p\in{\mathbb P}\setminus\{2\}$ let 
$S_p:=\{x_n^p\}_{n\geq1}$ with
\begin{itemize}
\item $x_1^p=p$,
\item $x_{n+1}^p=p^{x_n^p-1}$ ($n\geq1$),
\end{itemize}
then $S_3,S_5,S_7,S_{11},\ldots$ are disjoint infinite $d-$anti--orbit sequences, so ${\mathfrak a}(d)=+\infty$. In addition for all $n\geq1$, $d(n)\leq n$ so ${\mathfrak o}(d)=0$.
\end{lemma}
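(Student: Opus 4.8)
The plan is to follow the template of Lemma~\ref{taha10}, the only new ingredient being the self--referential tower defining $S_p$. Everything rests on the elementary identity $d(q^m)=m+1$, valid for a prime $q$ and $m\geq0$, which holds because the divisors of $q^m$ are precisely $1,q,\dots,q^m$.

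First I would verify the anti--orbit relation. Since $x_n^p\geq1$ the exponent $x_n^p-1$ is nonnegative, so $x_{n+1}^p=p^{x_n^p-1}$ is well defined and
\[
d(x_{n+1}^p)=d\bigl(p^{x_n^p-1}\bigr)=(x_n^p-1)+1=x_n^p ,
\]
which is exactly the condition $x_n^p=d(x_{n+1}^p)$ required for $\{x_n^p\}_{n\geq1}$ to be an infinite $d$--anti--orbit, once we know the sequence is one--to--one.

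For injectivity I would argue by induction that every $x_n^p$ is a positive power of $p$ and that the sequence is strictly increasing. Indeed $x_1^p=p^1$; and if $x_n^p=p^a$ with $a\geq1$ then $x_{n+1}^p=p^{x_n^p-1}$ has exponent $x_n^p-1\geq p-1\geq2$, and $x_{n+1}^p=p^{x_n^p-1}\geq 3^{x_n^p-1}>x_n^p$ because $3^{m-1}>m$ for every integer $m\geq2$ while $x_n^p\geq p\geq3$. Hence $\{x_n^p\}_{n\geq1}$ is strictly increasing, so one--to--one, and $S_p$ is a genuine infinite $d$--anti--orbit sequence. The same induction shows $S_p\subseteq\{p^k:k\geq1\}$, and since $\{p^k:k\geq1\}\cap\{q^k:k\geq1\}=\emptyset$ for distinct primes $p\neq q$ (unique factorization), the sequences $S_3,S_5,S_7,\dots$ are pairwise disjoint; as there are infinitely many odd primes this yields ${\mathfrak a}(d)=+\infty$. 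Finally every divisor of $n$ lies in $\{1,\dots,n\}$, so $d(n)\leq n$ for all $n\geq1$, and Lemma~\ref{taha70} gives ${\mathfrak o}(d)=0$.

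I do not anticipate a real obstacle: the only two places needing attention are checking that the recursion never produces exponent $0$ (so the tower stays strictly inside the powers of $p$) and the inequality $3^{m-1}>m$ that drives the monotonicity --- both are routine, and the rest is parallel bookkeeping to Lemma~\ref{taha10}.
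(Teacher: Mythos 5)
Your proof is correct and follows essentially the same route as the paper's: the identity $d(p^{m})=m+1$ gives the anti--orbit relation, membership of each $S_p$ in the powers of $p$ gives pairwise disjointness, strict monotonicity gives injectivity, and $d(n)\leq n$ with Lemma~\ref{taha70} gives ${\mathfrak o}(d)=0$. You merely spell out the details (the induction keeping the exponent positive and the inequality $3^{m-1}>m$) that the paper leaves implicit.
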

%%%%%%%%%%%%%%%%%%%%%%%%%%%%%%%%%%%%
\begin{proof}
For $n,m\geq1$ and $p,q\in{\mathbb P}\setminus\{2\}$ we have $d(x_{n+1}^p)=d(p^{x_n^p-1})=x_n^p-1+1=x_n^p$ moreover if $x_n^p=x_m^q$ then the unique prime divisor of $x_n^p$ is $p$ and   the unique prime divisor of $x_m^q$ is $q$, so $p=q$ thus $x_n^p=x_m^p$  moreover for all $i\geq1$ we have 
$x_i^p<x_{i+1}^p$, so $x_n^p=x_m^p$ leads to $n=m$.
\end{proof}
%%%%%%%%%%%%%%%%%%%%%%%%%%%%%%%%%%%%
\begin{lemma}\label{taha30}
For $p\in{\mathbb P}$ let 
$S_p:=\{x_n^p\}_{n\geq1}$ with
\begin{itemize}
\item $x_1^p=p$,
\item $x_{n+1}^p=p^{x_n^p}$ ($n\geq1$),
\end{itemize}
then $S_2,S_3,S_5,S_7,S_{11},\ldots$ are disjoint infinite $\Omega-$anti--orbit sequences, so ${\mathfrak a}(\Omega)=+\infty$. In addition for all $n\geq1$, $
\Omega(n)\leq n$ so ${\mathfrak o}(\Omega)=0$.
\end{lemma}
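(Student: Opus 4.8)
The plan is to follow the same template as the proof of Lemma~\ref{taha20}, replacing the arithmetic of $d$ by that of $\Omega$. The key identity is that for a prime $p$ and any exponent $k\geq 1$ one has $\Omega(p^k)=k$; this is precisely why the recursion here reads $x_{n+1}^p=p^{x_n^p}$ (exponent $x_n^p$) rather than $p^{x_n^p-1}$ as in the $d$ case. So first I would verify the anti--orbit condition: for every $n\geq 1$,
\[
\Omega(x_{n+1}^p)=\Omega\bigl(p^{x_n^p}\bigr)=x_n^p,
\]
which is exactly $x_n^p=\Omega(x_{n+1}^p)$. Next I would check that each $S_p$ is a genuine one--to--one sequence: since $p\geq 2$ and $x_n^p\geq p\geq 2$ we have $x_{n+1}^p=p^{x_n^p}>x_n^p$, so $\{x_n^p\}_{n\geq 1}$ is strictly increasing, hence injective, hence an infinite sequence.

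Then I would establish that the family $\{S_p\}_{p\in{\mathbb P}}$ is pairwise disjoint. Every term of $S_p$ is a positive power of $p$: indeed $x_1^p=p$, and $x_{n+1}^p=p^{x_n^p}$ is a power of $p$ whenever $x_n^p\geq 1$, so by induction all $x_n^p$ are powers of $p$. Thus if $x_n^p=x_m^q$ for primes $p,q$, this common integer is simultaneously a power of $p$ and a power of $q$, so unique factorization forces $p=q$, and then strict monotonicity of $\{x_i^p\}_i$ forces $n=m$. Since ${\mathbb P}$ is infinite, this produces infinitely many pairwise disjoint infinite $\Omega$--anti--orbit sequences, whence ${\mathfrak a}(\Omega)=+\infty$.

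Finally, for the orbit number part I would note that, under the paper's convention $\Omega(1)=1$, we have $\Omega(1)=1\leq 1$, while for $n\geq 2$ with factorization $n=\prod_i p_i^{\alpha_i}$ one has $n\geq 2^{\sum_i\alpha_i}\geq\sum_i\alpha_i=\Omega(n)$; hence $\Omega(n)\leq n$ for all $n\geq 1$, and Lemma~\ref{taha70} gives ${\mathfrak o}(\Omega)=0$. I do not anticipate any real obstacle: the argument is entirely parallel to Lemma~\ref{taha20}, and the only spots needing a little care are the convention at $n=1$ and the bookkeeping in the disjointness step.
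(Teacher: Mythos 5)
Your proposal is correct and follows essentially the same route as the paper's proof: the identity $\Omega(p^{x_n^p})=x_n^p$ for the anti--orbit condition, strict monotonicity of each $S_p$ for injectivity, the unique prime divisor argument for pairwise disjointness, and $\Omega(n)\leq n$ together with Lemma~\ref{taha70} for ${\mathfrak o}(\Omega)=0$. Your write-up merely spells out a few details (the induction showing each term is a power of $p$, and the bound $n\geq 2^{\Omega(n)}\geq\Omega(n)$) that the paper leaves implicit.
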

%%%%%%%%%%%%%%%%%%%%%%%%%%%%%%%%%%%%
\begin{proof}
For $n,m\geq1$ and $p,q\in{\mathbb P}$ we have $\Omega(x_{n+1}^p)=\Omega(p^{x_n^p})=x_n^p$ moreover if $x_n^p=x_m^q$ then the unique prime divisor of $x_n^p$ is $p$ and   the unique prime divisor of $x_m^q$ is $q$, so $p=q$ thus $x_n^p=x_m^p$  moreover for all $i\geq1$ we have 
$x_i^p<x_{i+1}^p$, so $x_n^p=x_m^p$ leads to $n=m$.
\end{proof}
%%%%%%%%%%%%%%%%%%%%%%%%%%%%%%%%%%%%
\begin{lemma}\label{taha40}
For $p\in{\mathbb P}\setminus\{2\}$ let 
$S_p:=\{x_n^p\}_{n\geq1}$ with (suppose $q_n$ is the $n$th prime number):
\begin{itemize}
\item $x_1^p=p=q_j$,
\item $x_{n+1}^p=pq_{j+1}q_2\cdots q_{j+x_n^p-1}$ ($n\geq1$),
\end{itemize}
then $S_3,S_5,S_7,S_{11},\ldots$ are disjoint infinite $\omega-$anti--orbit sequences, so ${\mathfrak a}(\omega)=+\infty$. In addition for all $n\geq1$, $
\omega(n)\leq n$ so ${\mathfrak o}(\omega)=0$.
\end{lemma}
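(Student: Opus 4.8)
The plan is to follow the pattern of Lemmas \ref{taha20} and \ref{taha30}, the one new feature being that $\omega$ only records \emph{distinct} prime divisors, so a number whose $\omega$-value is $m$ must be a product of exactly $m$ distinct primes rather than a prime power. The two halves of the statement are unrelated and the second is immediate: with the standing convention $\omega(1)=1$ one has $\omega(n)\le n$ for every $n\ge1$ (indeed $\omega(n)\le\log_2 n<n$ for $n\ge2$), so Lemma \ref{taha70} gives ${\mathfrak o}(\omega)=0$. All the real content is in exhibiting infinitely many disjoint infinite $\omega$-anti-orbits.

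First I would verify that each $S_p=\{x_n^p\}_{n\ge1}$ is an infinite $\omega$-anti-orbit, i.e.\ a one-to-one sequence with $x_n^p=\omega(x_{n+1}^p)$ for all $n$. By construction $x_{n+1}^p$ is a squarefree integer equal to the product of $x_n^p$ consecutive primes beginning with $p=q_j$; since consecutive primes are distinct this product has exactly $x_n^p$ prime factors, so $\omega(x_{n+1}^p)=x_n^p$. Injectivity comes for free once one notes that $S_p$ is strictly increasing: $x_1^p=p<x_2^p$, and for $n\ge1$ every prime dividing $x_{n+1}^p$ is $\ge p\ge3$, so $x_{n+1}^p\ge 3^{x_n^p}>x_n^p$; a strictly increasing sequence in ${\mathbb N}$ is one-to-one.

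Next I would prove that $S_p$ and $S_{p'}$ are disjoint whenever $p\ne p'$ are odd primes. The invariant that does the job is: the least prime factor of $x_n^p$ is $p$ for every $n\ge1$ --- for $n=1$ because $x_1^p=p$, and for $n\ge2$ because $x_n^p$ is $p$ multiplied by primes all strictly larger than $p$. Consequently an equality $x_n^p=x_m^{p'}$ forces $p=p'$, and then the strict monotonicity just established forces $n=m$. Since there are infinitely many odd primes this produces, for every $N$, a family of $N$ pairwise disjoint infinite $\omega$-anti-orbits, whence ${\mathfrak a}(\omega)=+\infty$.

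I do not anticipate a genuine obstacle: the argument is of the same flavour as the preceding lemmas. The only places demanding attention are the exact count of prime factors of $x_{n+1}^p$ (it must be $x_n^p$ on the nose, since $\omega$ is the target of the recursion) and the verification that ``$p$ is the least prime factor'' is preserved along the recursion, which only requires unwinding the indices $q_{j+1},\dots,q_{j+x_n^p-1}$; both are mechanical.
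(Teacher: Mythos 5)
Your proposal is correct and follows essentially the same route as the paper: compute $\omega(x_{n+1}^p)=x_n^p$ by counting the $x_n^p$ distinct primes $q_j,q_{j+1},\ldots,q_{j+x_n^p-1}$, use the least prime divisor to separate $S_p$ from $S_{p'}$, and use strict monotonicity of each $S_p$ for injectivity, with Lemma~\ref{taha70} handling $\mathfrak{o}(\omega)=0$. (You also silently correct the typo $q_2$ to $q_{j+2}$ in the recursion, as the paper's own proof does.)
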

%%%%%%%%%%%%%%%%%%%%%%%%%%%%%%%%%%%%
\begin{proof}
For $n,m\geq1$ and $p,q\in{\mathbb P}\setminus\{2\}$ we have 
\[\omega(x_{n+1}^p)=\omega(pq_{j+1}q_{j+2}\cdots q_{j+x_n^p-1})=x_n^p\]
moreover if $x_n^p=x_m^q$ then the least prime divisor of $x_n^p$ is $p$ and   the least prime divisor of $x_m^q$ is $q$, so $p=q$ thus $x_n^p=x_m^p$  moreover for all $i\geq1$ we have 
$x_i^p<x_{i+1}^p$, so $x_n^p=x_m^p$ leads to $n=m$.
\end{proof}
%%%%%%%%%%%%%%%%%%%%%%%%%%%%%%%%%%%%
\begin{lemma}\label{salam10}
For $k\geq1$ let 
$S_k:=\{3^k2^n\}_{n\geq1}$,
then $S_1,S_2,\ldots$ are disjoint infinite $\psi-$orbit sequences, so ${\mathfrak o}(\psi)=+\infty$. In addition for all $n\geq1$, $
\psi(n)\geq n$ so ${\mathfrak a}(\psi)=0$.
\end{lemma}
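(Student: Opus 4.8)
The plan is to imitate the computation in Lemma~\ref{taha10}, replacing Euler's totient by the Dedekind psi function and interchanging the roles of the exponents of $2$ and $3$. First I would recall that $\psi(m)=m\prod\{1+\frac1p:p\in{\mathbb P},\,p\mid m\}$ and observe that for $k,n\geq1$ the integer $3^k2^n$ has prime divisor set exactly $\{2,3\}$. Hence
\[\psi(3^k2^n)=3^k2^n\left(1+\tfrac12\right)\left(1+\tfrac13\right)=3^k2^n\cdot\tfrac32\cdot\tfrac43=3^k2^{n+1},\]
so $\psi$ sends the $n$th term of $S_k$ to its $(n+1)$th term. Since $3^k2^n=3^k2^{n'}$ forces $n=n'$, each $S_k$ is a one-to-one sequence, hence an infinite $\psi$-orbit.

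Next I would check disjointness: by uniqueness of prime factorization, $3^k2^n=3^{k'}2^{n'}$ holds if and only if $k=k'$ and $n=n'$, so $S_k\cap S_{k'}=\varnothing$ whenever $k\neq k'$. Thus $S_1,S_2,\ldots$ are infinitely many pairwise disjoint infinite $\psi$-orbit sequences, which gives ${\mathfrak o}(\psi)=+\infty$.

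Finally, for ${\mathfrak a}(\psi)=0$ I would note that $\psi(1)=1$ and, for $m>1$, $\psi(m)=m\prod\{1+\frac1p:p\mid m\}\geq m$ because every factor exceeds $1$; hence $\psi(n)\geq n$ for all $n\geq1$, and Lemma~\ref{taha50} applies directly. I do not anticipate any genuine obstacle here; the only point requiring a moment's care is confirming that the prime-divisor set of $3^k2^n$ is precisely $\{2,3\}$ rather than something smaller, which is exactly why the hypothesis $k,n\geq1$ is imposed.
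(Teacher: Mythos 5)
Your proposal is correct and matches the paper's argument in all essentials: the same computation $\psi(3^k2^n)=3^k2^{n+1}$ (you use the product formula, the paper uses multiplicativity on prime powers, which is the same thing), the same appeal to unique factorization for disjointness, and the same reduction of ${\mathfrak a}(\psi)=0$ to $\psi(n)\geq n$ via Lemma~\ref{taha50}. No issues.
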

%%%%%%%%%%%%%%%%%%%%%%%%%%%%%%%%%%%%
\begin{proof}
For $n,m,s,t\geq1$ we have $\psi(3^m2^n)=3^{m-1}(3+1)2^{n-1}(2+1)=3^m2^{n+1}$ moreover if $3^m2^n=3^s2^t$ if and only if $m=s$ and $n=t$.
\end{proof}
%%%%%%%%%%%%%%%%%%%%%%%%%%%%%%%%%%%%
\begin{lemma}\label{salam20}
For $k\geq1$ let 
$S_k:=\{2^{2^{n+1}k+2^n-1}3\}_{n\geq1}$,
then $S_1,S_2,\ldots$ are disjoint infinite $J_2-$orbit sequences, so ${\mathfrak o}(J_2)=+\infty$. In addition for all $n\geq1$, $
J_2(n)\geq n$ so ${\mathfrak a}(J_2)=0$.
\end{lemma}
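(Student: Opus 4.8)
The plan is to imitate Lemma~\ref{salam10}: exhibit an explicit infinite $J_2$-orbit inside each ``column'' $\{2^a\cdot3:a\geq1\}$ and then check the columns are disjoint. The starting point is the multiplicative formula $J_2(m)=\prod\{p^{2\alpha-2}(p^2-1):p\in{\mathbb P},\ p^\alpha\|m\}$, which is just $J_2(m)=m^2\prod\{1-\frac1{p^2}:p\in{\mathbb P},\ p|m\}$ rewritten prime by prime. Applying it to $m=2^a\cdot3$ with $a\geq1$ gives $J_2(2^a\cdot3)=2^{2a-2}(2^2-1)(3^2-1)=2^{2a-2}\cdot3\cdot8=2^{2a+1}\cdot3$, so on these integers $J_2$ simply replaces the exponent $a$ of $2$ by $2a+1$. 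Now write $a_n:=2^{n+1}k+2^n-1$ for the exponent occurring in $S_k$; one computes $2a_n+1=2^{n+2}k+2^{n+1}-1=a_{n+1}$, hence $J_2\big(2^{a_n}\cdot3\big)=2^{a_{n+1}}\cdot3$, i.e.\ the $(n{+}1)$-st term of $S_k$ is $J_2$ of the $n$-th. Since $n\mapsto a_n$ is strictly increasing and $2^a\cdot3=2^b\cdot3\Leftrightarrow a=b$, the terms of $S_k$ are pairwise distinct, so $S_k$ is genuinely an infinite $J_2$-orbit sequence.

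For disjointness of $S_1,S_2,\dots$, suppose a term of $S_k$ equals a term of $S_{k'}$; cancelling the factor $3$ and equating exponents gives $2^{n+1}k+2^n-1=2^{m+1}k'+2^m-1$, which rearranges to $2^n(2k+1)=2^m(2k'+1)$. As $2k+1$ and $2k'+1$ are odd, comparing the powers of $2$ forces $n=m$, and then $2k+1=2k'+1$ forces $k=k'$. Thus the $S_k$ are pairwise disjoint, and having infinitely many disjoint infinite $J_2$-orbit sequences yields $\mathfrak{o}(J_2)=+\infty$ (equivalently ${\rm ent}_{set}(J_2)=+\infty$ by the Note in the introduction).

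It remains to see $J_2(n)\geq n$ for every $n\geq1$, after which Lemma~\ref{taha50} gives $\mathfrak{a}(J_2)=0$. From the same product formula, $\dfrac{J_2(n)}{n}=\prod\{p^{\alpha-2}(p^2-1):p\in{\mathbb P},\ p^\alpha\|n\}$, and each factor is $\geq1$: for $\alpha=1$ it equals $p-\frac1p\geq\frac32$, and for $\alpha\geq2$ it is at least $p^2-1\geq3$; the empty product (case $n=1$) is $1$. Hence $J_2(n)\geq n$ and Lemma~\ref{taha50} applies.

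I do not expect a real obstacle here; the content is entirely bookkeeping with exponents. The only point demanding a little care is the disjointness step, where one must split off the odd part $2k+1$ before comparing powers of $2$ — the remaining verifications, that $2a_n+1=a_{n+1}$ and that $J_2(m)\geq m$ factor by factor, are routine.
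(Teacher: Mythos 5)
Your proposal is correct and follows essentially the same route as the paper: compute $J_2(2^a\cdot 3)=2^{2a+1}\cdot 3$, check the exponent recursion $2a_n+1=a_{n+1}$, and prove disjointness by writing $2^{n+1}k+2^n-1=2^n(2k+1)-1$ and comparing the $2$-adic part with the odd part. Your factor-by-factor verification that $J_2(n)\geq n$ is a welcome addition, since the paper merely asserts this inequality without proof.
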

%%%%%%%%%%%%%%%%%%%%%%%%%%%%%%%%%%%%
\begin{proof}
For $n,m,s,t\geq1$ we have $J_2(2^{2^{n+1}m+2^n-1}3)=2^{2(2^{n+1}m+2^n-1)-2}(2^2-1)3^{2-2}(3^2-1)=2^{2(2^{n+1}m+2^n-1)+1}3=2^{2^{(n+1)}m+2^{n+1}-1}3$ moreover:
\begin{eqnarray*}
2^{2^{n+1}m+2^n-1}3=2^{2^{s+1}t+2^s-1}3 & \Leftrightarrow & 
	2^{n+1}m+2^n-1=2^{s+1}t+2^s-1 \\
& \Leftrightarrow & 2^n(2m+1)=2^s(2t+1) \\
& \Leftrightarrow & n=s\wedge m=t\:.
\end{eqnarray*}
\end{proof}
%%%%%%%%%%%%%%%%%%%%%%%%%%%%%%%%%%%%
\begin{note}\label{salam30}
Suppose $f:\mathbb{N}\to\mathbb{N}$ is a multiplicative function
$g:\mathbb{P}\times\mathbb{N}\to\mathbb{N}\cup\{0\}$
and $h:\mathbb{P}\to\mathbb{N}\cup\{0\}$ such that $f(1)=1$ and
$f(p^n)=p^{g(p,n)}h(p)(\geq1)$ for all $p\in\mathbb{P}$
and $n\geq1$. Also suppose there exist distinct 
$p,q\in\mathbb{P}$ and $u,v\geq1$ with
$p^u=h(q)$ and $q^v=h(p)$ define
$s,t:\mathbb{N}\to\mathbb{N}$ with $s(x)=g(p,x)+u$, $t(x)=g(q,x)+v$.
If there exists $(x_1,y_1),(x_2,y_2),\ldots\in\mathbb{N}\times\mathbb{N}$ 
such that
$\mathop{\mathbb{N}\times\mathbb{N}\to\mathbb{N}\times\mathbb{N}\SP\SP
}\limits_{(n,m)\mapsto(s^n(x_m),t^n(y_m))}$ is one--to--one, then
$\mathfrak{o}(f)=+\infty$ since for $S_n:=\{p^{s^i(x_n)}q^{t^i(y_n)}\}_{i\geq1}$,
the sequences $S_1,S_2,\ldots$ are disjoint infinite $f-$orbit sequences
(use the fact that 
\begin{eqnarray*}
f(p^{s^i(x_n)}q^{t^i(y_n)}) & = & 
	p^{g(p,s^i(x_n))}h(p)q^{g(q,t^i(y_n))}h(q) \\
& = & p^{g(p,s^i(x_n))+u}q^{g(q,t^i(y_n))+v}=p^{s^{i+1}(x_n)}q^{t^{i+1}(y_n)}\: ).
\end{eqnarray*}
Lemmas~\ref{salam10} and~\ref{salam20} are examples of the above
construction.
\end{note}
%%%%%%%%%%%%%%%%%%%%%%%%%%%%%%%%%%%%
\begin{note}
As a generalization of Note~\ref{salam30}
suppose $f:\mathbb{N}\to\mathbb{N}$ is a multiplicative function
$g:\mathbb{P}\times\mathbb{N}\to\mathbb{N}\cup\{0\}$
and $h:\mathbb{P}\to\mathbb{N}\cup\{0\}$ such that $f(1)=1$ and
$f(p^n)=p^{g(p,n)}h(p)(\geq1)$ for all $p\in\mathbb{P}$
and $n\geq1$. Also suppose there exist distinct 
$p_1,\ldots,p_m\in\mathbb{P}$ and $(u^i_1,\ldots,u^i_m)\in{\mathbb N}^m$
(for $i=1,\ldots,m$) with $u^i_j+g_(p_i,x)\geq1$ for all $i,j,x$ and
$p_1^{u_1^i}\cdots p_m^{u_m^i}=h(p_i)$ define
$s_i:\mathbb{N}\to\mathbb{N}$ with $s_i(x)=g(p_i,x)+
(u_i^1+\cdots+u_i^m)$.
If there exists $(x^1_1,x^2_1,\ldots,x_1^m),(x^1_2,x^2_2,\ldots,x_2^m),\ldots\in\mathbb{N}\times\mathbb{N}$ 
such that
$\mathop{\mathbb{N}^l\to\mathbb{N}^l\SP\SP\SP\SP\SP\SP\SP
}\limits_{(i,j)\mapsto(s_1^i(x^1_j),s_2^i(x^2_j),\ldots,s_m^i(x_j^m))}$ 
is one--to--one, then
$\mathfrak{o}(f)=+\infty$ since for $S_n:=\{p_1^{s_1^i(x^1_n)}p_2^{s_2^i(x^2_n)}\cdots p_m^{s_m^i(x^m_n)}\}_{i\geq1}$,
the sequences $S_1,S_2,\ldots$ are disjoint infinite $f-$orbit sequences.
\end{note}
%%%%%%%%%%%%%%%%%%%%%%%%%%%%%%%%%%%%
\begin{jadval}\label{taha80}
We have the following table:
\[\begin{array}{l|l|c|c|}
& \lambda & {\mathfrak o}(\lambda) & {\mathfrak a}(\lambda) \\ \hline
1st. \: row & \varphi=J_1,d(=d_2),\Omega,\omega & 0 & +\infty \\ \hline
2nd. \: row & \varphi^* & 0 & \\ \hline
3rd. \: row & J_2, \psi(=\psi_1) & +\infty & 0 \\ \hline
4th. \: row & \sigma_k,\psi_k, J_{k+2} (k\geq1) & >0 & 0 \\ \hline
\end{array}\]
\end{jadval}
%%%%%%%%%%%%%%%%%%%%%%%%%%%%%%%%%%%%
\begin{proof}
For the 1st. row use 
Lemmas~\ref{taha10},~\ref{taha20},~\ref{taha30},~and~\ref{taha40}.
\\
For the 2nd. row use 
Lemma~\ref{taha70}
\\
For the 3rd. row use 
Lemmas~\ref{salam10}~and~\ref{salam20}.
\\
For the 4th. row use Lemmas~\ref{taha50},~\ref{taha60} and  the
fact that for all $n\geq2$ we have $\sigma_k(n)>n,\psi_k(n)>n,J_{k+2}(n)>n$
(for $k\geq1$).
\end{proof}
%%%%%%%%%%%%%%%%%%%%%%%%%%%%%%%%%%%%
\begin{note}\label{taha90}
{\bf 1.} For $f:{\mathbb N}\to{\mathbb N}$ with $f(n)\geq n$ (for
all $n\geq1$) we have $f^{-1}(m)\subseteq\{1,\ldots,m\}$ (for all
$m\geq1$) and $f:{\mathbb N}\to{\mathbb N}$ is finite fibre, thus
$\sigma_k,\psi_k,J_{k+1}$ (for $k\geq1$) are finite fibre.
\\
{\bf 2.} For distinct prime numbers $p_1,\ldots,p_n$
and $\alpha_1,\ldots,\alpha_n\geq1$ with
$\varphi(p_1^{\alpha_1}\cdots p_n^{\alpha_n})
=m$ we have
$p_i-1\leq m$ and $2^{\alpha_i-1}\leq p_i^{\alpha_i-1}\leq m$ for all $i=1,\ldots,n$,
so $p_1,\ldots,p_n\leq m+1$ and $\alpha_1,\ldots,\alpha_n\leq \dfrac{\log m}{\log 2}+1$ 
therefore 
\[p_1^{\alpha_1}\cdots p_n^{\alpha_n}\leq\prod\left\{p^{\left[\dfrac{\log m}{\log 2}+1\right]}:p\in\mathbb{P},p\leq m+1\right\}\:,\]
hence for all $m\geq1$ we have
\[\varphi^{-1}(m)\subseteq\left\{1,\ldots,\prod\left\{p^{\left[\dfrac{\log m}{\log 2}+1\right]}:p\in\mathbb{P},p\leq m+1\right\}\right\}\:.\]
Thus for all $m\geq1$, $\varphi^{-1}(m)$ is finite and
$\varphi$ is finite fibre.
\\
{\bf 3.} For $k\geq2$ we have
${\mathbb P}\subseteq \omega^{-1}\cap\Omega^{-1}(1)\cap d_k^{-1}(k)$
thus $\omega,\Omega,d_k$ are not finite fibre.
\end{note}
%%%%%%%%%%%%%%%%%%%%%%%%%%%%%%%%%%%%
%%%%%%%%%%%%%%%%%%%%%%%%%%%%%%%%%%%%
\begin{jadval}
By Table~\ref{taha80} and Note~\ref{taha90} we have the following table (where ``$-$'' indicates
that for the corresponding case $\lambda$ is not finite fibre
and contravariant set--theoretical entropy of $\lambda$ is
undefined):
\[\begin{array}{l|c|c|}
 \lambda & {\rm ent}_{\rm set}(\lambda) & {\rm ent}_{\rm cset}(\lambda) \\ \hline
\varphi(=J_1) & 0 & +\infty \\ \hline
\Omega,\omega & 0 & - \\ \hline
J_2, \psi(=\psi_1) & +\infty & 0 \\ \hline
\sigma_k,\psi_k, J_{k+2} (k\geq1) & >0 & 0 \\ \hline
\end{array}\]
\end{jadval}
%%%%%%%%%%%%%%%%%%%%%%%%%%%%%%%%%%%%
\begin{problem}
Consider $k\geq1$:
\\
$\bullet$ Compute ${\mathfrak a}(\varphi^*),{\mathfrak o}(d_{k+2}),
{\mathfrak a}(d_{k+2})$.
\\
$\bullet$ For $\lambda=\sigma_k,\psi_{k+1},J_{k+2}$ compute
${\rm ent}_{set}(\lambda)$.
\end{problem}
%%%%%%%%%%%%%%%%%%%%%%%%%%%%%%%%%%%%
%%%%%%%%%%%%%%%%%%%%%%%%%%%%%%%%%%%%
%%%%%%%%%%%%%%%%%%%%%%%%%%%%%%%%%%%%
\section{Some notes on Euler's totient function and Alexandroff topologies on $\mathbb N$}
\noindent We call topological space $X$ Alexandroff, if
intersection of any nonempty family of open sets is open \cite{A}.
In Alexandroff topological space $(X,\tau)$ for every $x
\in X$ we denote the smallest open neighbourhood of $x\in X$ with $V(x,\tau)$. For $f:X\to X$:
\begin{itemize}
\item $\mathcal{B}=\{\bigcup\{f^{-n}(x):n\geq0\}:x\in X\}$
\item $\overline{\mathcal{B}}=\{\{f^n(x):n\geq0\}:x\in X\}$
\end{itemize}
are basis of Alexandroff topologies on $X$. We call topology
generated by $\mathcal{B}$, functional Alexandroff topology on $X$
(with respect to $f$) and denote this topology by $\tau_f$ \cite{AG}.
We call topology
generated by $\overline{\mathcal{B}}$, Alexandroff topology on $X$
with respect to $f$ and denote this topology by $\overline{\tau}_f$ \cite{R}.
For $f:X\to X$ and $x\in X$, we have:
\begin{itemize}
\item $V(x,\tau_f)=\bigcup\{f^{-n}(x):n\geq0\}$,
\item $V(x,\overline{\tau}_f)=\{f^n(x):n\geq0\}$.
\end{itemize}
As it has been mentioned in \cite{AKS}, set--theoretical entropies of $f:X\to X$ interact with cellularities of the above mentioned Alexandroff spaces on $X$. So we devote this section to arising
Alexandroff topologies from some of number theoretical functions. 
%%%%%%%%%%%%%%%%%%%%%%%%%%%%%%%%%%%%
\begin{lemma}\label{salam40}
For $f:{\mathbb N}\to\mathbb{N}$ and $k\in\mathbb{N}$ we have:
\begin{itemize}
\item[1.] if for $n\geq1$ we have $f(n)\geq n$, then 
$V(k,\tau_f)\subseteq\{1,\ldots,k\}$;
\item[2.] if for $n\geq1$ we have $f(n)\leq n$, then 
$V(k,\overline{\tau}_f)\subseteq\{1,\ldots,k\}$.
\end{itemize}
\end{lemma}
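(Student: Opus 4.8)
The plan is to read off both inclusions directly from the explicit descriptions of the minimal open neighbourhoods recorded in the paragraph preceding the statement, namely $V(k,\tau_f)=\bigcup\{f^{-n}(k):n\geq0\}$ and $V(k,\overline{\tau}_f)=\{f^n(k):n\geq0\}$, and then to iterate the one--step hypothesis along the relevant orbit.

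For item~1, I would assume $f(n)\geq n$ for every $n\geq1$ and take an arbitrary $m\in V(k,\tau_f)$. By the first formula there is some $n\geq0$ with $f^n(m)=k$. Applying the hypothesis successively to the points $m,f(m),\ldots,f^{n-1}(m)$ gives the chain
\[m\leq f(m)\leq f^2(m)\leq\cdots\leq f^n(m)=k,\]
so $m\leq k$; since $m\in\mathbb{N}$, this yields $m\in\{1,\ldots,k\}$, as required.

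For item~2, I would assume $f(n)\leq n$ for every $n\geq1$ and take $m\in V(k,\overline{\tau}_f)$, so that $m=f^n(k)$ for some $n\geq0$ by the second formula. Applying the hypothesis to $k,f(k),\ldots,f^{n-1}(k)$ gives
\[m=f^n(k)\leq f^{n-1}(k)\leq\cdots\leq f(k)\leq k,\]
while $m=f^n(k)\in\mathbb{N}$ forces $m\geq1$; hence $m\in\{1,\ldots,k\}$.

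I do not expect any genuine obstacle here: each inclusion is immediate once the correct neighbourhood formula is substituted, and the two cases are formally dual (preimages under $f$ versus forward images under $f$). The only points requiring a little care are to pair $\tau_f$ with the preimage description and $\overline{\tau}_f$ with the forward--orbit description, and to note that the convention $\mathbb{N}=\{1,2,\ldots\}$ supplies the lower bound $m\geq1$ for free.
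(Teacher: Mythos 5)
Your argument is correct and coincides with the paper's own proof: in each case you substitute the explicit formula for the minimal open neighbourhood and iterate the pointwise inequality along the orbit to get $x\leq k$. The paper states the chain $k=f^m(x)\geq x$ (resp.\ $x=f^m(k)\leq k$) slightly more tersely, but the content is identical.
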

%%%%%%%%%%%%%%%%%%%%%%%%%%%%%%%%%%%%
\begin{proof}
1) Suppose for all $n\geq1$ we have $f(n)\geq n$. For $k\geq1$ suppose
$x\in V(k,\tau_f)$, then there exists $m\geq0$ with $k=f^m(x)\geq x$.
\\
2) Suppose for all $n\geq1$ we have $f(n)\leq n$. For $k\geq1$ suppose
$x\in V(k,\overline{\tau}_f)$, then there exists $m\geq0$ with $x=f^m(k)\leq k$.
\end{proof}
%%%%%%%%%%%%%%%%%%%%%%%%%%%%%%%%%%%%
\begin{lemma}\label{salam50}
For $f:{\mathbb N}\to\mathbb{N}$ and $k\in\mathbb{N}$ 
if for $n>1$ we have $f(n)< n$ and $f(1)=1$, then 
$1\in V(k,\overline{\tau}_f)$ and $(\mathbb{N},\overline{\tau}_f)$ is connected. Also $V(1,\tau_f)=\mathbb{N}$ and $(\mathbb{N},\tau_f)$ is
connected too.
\end{lemma}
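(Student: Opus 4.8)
The plan is to reduce everything to one elementary fact: for every $k\in\mathbb{N}$ there is an $m\geq0$ with $f^m(k)=1$. For $k=1$ this holds with $m=0$ since $f(1)=1$; for $k>1$, as long as $f^j(k)>1$ the hypothesis $f(n)<n$ for $n>1$ forces $f^{j+1}(k)<f^j(k)$, so $k>f(k)>f^2(k)>\cdots$ is a strictly decreasing sequence of positive integers and must hit $1$, after which it stays there because $f(1)=1$. This descending‑chain observation is the only nontrivial step, and it is a one‑line well‑ordering argument; I expect no real obstacle beyond stating it cleanly.

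With this in hand the $\overline{\tau}_f$ claims are immediate from $V(k,\overline{\tau}_f)=\{f^n(k):n\geq0\}$: taking $n=m$ gives $1\in V(k,\overline{\tau}_f)$. For connectedness I would argue by contradiction: if $\mathbb{N}=U\cup W$ with $U,W$ open, nonempty, disjoint, say $1\in U$, then for any $k\in W$ the set $W$ is an open neighbourhood of $k$, and since $V(k,\overline{\tau}_f)$ is the smallest open set containing $k$ we get $1\in V(k,\overline{\tau}_f)\subseteq W$, contradicting $1\in U$ and $U\cap W=\emptyset$. Hence $W=\emptyset$ and $(\mathbb{N},\overline{\tau}_f)$ is connected.

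For $\tau_f$, recall $V(1,\tau_f)=\bigcup\{f^{-n}(1):n\geq0\}$. The combinatorial fact says exactly that every $k$ lies in $f^{-m}(1)$ for some $m$, so $V(1,\tau_f)=\mathbb{N}$. Thus $\mathbb{N}$ is the smallest—hence the only—open set containing $1$, and the same splitting argument as above (now the piece containing $1$ already has to equal all of $\mathbb{N}$) shows $(\mathbb{N},\tau_f)$ is connected. If one prefers a uniform phrasing, in both topologies the point $1$ belongs to every nonempty open set, so each space is in fact irreducible, a fortiori connected.
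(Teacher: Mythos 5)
Your proposal is correct and follows essentially the same route as the paper: both rest on the observation that the hypothesis $f(n)<n$ for $n>1$ forces every forward orbit to descend to the fixed point $1$ (the paper phrases this as a minimum argument on $V(k,\overline{\tau}_f)$ and as $f^n(n)=1$ for the $\tau_f$ part, you phrase it as a descending chain), and both conclude connectedness from the fact that $1$ lies in every nonempty open set. No gaps.
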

%%%%%%%%%%%%%%%%%%%%%%%%%%%%%%%%%%%%
\begin{proof}
Suppose $m=\min V(k,\overline{\tau}_f)$, then 
$f(m)\in V(k,\overline{\tau}_f)$, so 
\[m\geq f(m)\geq \min
V(k,\overline{\tau}_f)\]
and $m=f(m)$ hence $m=1$ and 
$1\in V(k,\overline{\tau}_f)$. Since $1$ belongs to every
nonempty subset of $(\mathbb{N},\overline{\tau}_f)$, thus it
does not have any disjoint nonempty open subset and it is connected.
\\
For all $n\geq1$ we have $f^n(n)=1$, so $n\in V(1,\tau_f)$ and
$V(1,\tau_f)=\mathbb{N}$. For nonempty open subsets of $U,V$
of $(\mathbb{N},\tau_f)$ with $\mathbb{N}=U\cup V$ we may
suppose $1\in U$ so $V(1,\tau_f)\subseteq U$ and $\mathbb{N}=U$
which leads to connectivity of $(\mathbb{N},\tau_f)$.
\end{proof}
%%%%%%%%%%%%%%%%%%%%%%%%%%%%%%%%%%%%
\begin{lemma}\label{salam60}
For $f:{\mathbb N}\to\mathbb{N}$ suppose
for $n>1$ we have $f(n)\geq n$ and $f(1)=1$, then 
$(\mathbb{N},\overline{\tau}_f)$ and $(\mathbb{N},\tau_f)$ are
disconnected.
\end{lemma}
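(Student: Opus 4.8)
The plan is to exhibit $\{1\}$ as a nonempty, proper clopen subset of $\mathbb{N}$ in \emph{both} topologies $\overline{\tau}_f$ and $\tau_f$; since $\mathbb{N}$ has more than one point, this immediately yields disconnectedness. First I would record the consequences of the hypotheses at the point $1$: from $f(1)=1$ we get $f^n(1)=1$ for all $n\geq 0$, and from $f(n)\geq n>1$ for $n>1$ we get that no $n>1$ maps to $1$, so $f^{-n}(1)=\{1\}$ for all $n\geq 0$. Hence
\[V(1,\overline{\tau}_f)=\{f^n(1):n\geq 0\}=\{1\},\qquad V(1,\tau_f)=\bigcup_{n\geq 0}f^{-n}(1)=\{1\};\]
for the second equality one may instead simply quote Lemma~\ref{salam40}(1), which already gives $V(1,\tau_f)\subseteq\{1\}$. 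Thus $\{1\}$ is the minimal open neighbourhood of $1$ in each topology, in particular open.

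Next I would check that $\{1\}$ is also closed, i.e. that $\mathbb{N}\setminus\{1\}$ is open. Write $\mathbb{N}\setminus\{1\}=\bigcup_{x\geq 2}V(x,\overline{\tau}_f)$ in the first case and $\mathbb{N}\setminus\{1\}=\bigcup_{x\geq 2}V(x,\tau_f)$ in the second: each $V(x,\cdot)$ is a basic open set and contains $x$, so the union contains every $x\geq 2$, and it remains only to verify that $1$ lies in none of them. For $\overline{\tau}_f$: the inequality $f(n)\geq n$ together with $n>1$ propagates along orbits, so $f^n(x)\geq x\geq 2$ for all $x\geq 2$ and $n\geq 0$, whence $1\notin V(x,\overline{\tau}_f)$. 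For $\tau_f$: if $1\in V(x,\tau_f)$ then $f^n(1)=x$ for some $n\geq 0$, forcing $x=f^n(1)=1$; so $1\notin V(x,\tau_f)$ for $x\geq 2$. In both cases $\mathbb{N}\setminus\{1\}$ is a union of open sets, hence open.

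Combining the two steps, $\{1\}$ is a nonempty proper clopen subset of $(\mathbb{N},\overline{\tau}_f)$ and of $(\mathbb{N},\tau_f)$, so neither space is connected. I do not expect a real obstacle here: the only point requiring a moment's care is confirming that the basic open sets chosen to cover $\mathbb{N}\setminus\{1\}$ genuinely avoid the fixed point $1$, and this is exactly where the two hypotheses $f(1)=1$ and $f(n)\geq n$ for $n>1$ are used together.
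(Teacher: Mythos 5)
Your proposal is correct and follows exactly the paper's approach: the paper's entire proof is the assertion that $\{1\},\mathbb{N}\setminus\{1\}$ is a separation of both spaces, and you have simply supplied the verification that $\{1\}$ is clopen in each topology. The details you give (using $f(1)=1$ to get $V(1,\cdot)=\{1\}$ and $f(n)\geq n$ for $n>1$ to keep orbits and preimages of points $x\geq2$ away from $1$) are all accurate.
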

%%%%%%%%%%%%%%%%%%%%%%%%%%%%%%%%%%%%
\begin{proof}
$\{1\},\mathbb{N}\setminus\{1\}$ is a separation of 
$(\mathbb{N},\overline{\tau}_f)$ (and $(\mathbb{N},\tau_f)$).
\end{proof}
%%%%%%%%%%%%%%%%%%%%%%%%%%%%%%%%%%%%
\begin{example}
For $1\leq\alpha\leq\aleph_0$ suppose $M$ is a partition
of $\mathbb N$ to $\alpha$ infinite subsets of $\mathbb{N}$.
For $D\in M$ suppose $D=\{n_k^D\}_{k\geq1}$ with $n_1^D<n_2^D<\cdots$
and define $f_D:D\to D$ with $f_D(n_k^D)=n_{k+1}^D$ for $k\geq1$, then
for $f:=\mathop{\bigcup}\limits_{D\in M}f_D:\mathbb{N}\to\mathbb{N}$,
we have $f(n)>n$ for all $n\geq1$ and $M$ is the collection of all
connected components of $(\mathbb{N},\tau_f)$.
\end{example}
%%%%%%%%%%%%%%%%%%%%%%%%%%%%%%%%%%%%
\begin{jadval}
By Lemmas~\ref{salam50} and~\ref{salam60} we have following table:
\[\begin{array}{l|c|}
\lambda & ({\mathbb N},\tau_\lambda) {\rm \: and \:} ({\mathbb N},\overline{\tau}_\lambda) \\ \hline
\varphi(=J_1),\varphi^*,\omega,\Omega,d(=d_2) & {\rm connected} \\ \hline
\sigma_k,\psi_k,J_{k+1} (k\geq1) & {\rm disconnected}\\ \hline
\end{array}\]
\end{jadval}
%%%%%%%%%%%%%%%%%%%%%%%%%%%%%%%%%%%%
%%%%%%%%%%%%%%%%%%%%%%%%%%%%%%%%%%%%
%%%%%%%%%%%%%%%%%%%%%%%%%%%%%%%%%%%%
%%%%%%%%%%%%%%%%%%%%%%%%%%%%%%%%%%%%
%%%%%%%%%%%%%%%%%%%%%%%%%%%%%%%%%%%%
%%%%%%%%%%%%%%%%%%%%%%%%%%%%%%%%%%%%
%%%%%%%%%%%%%%%%%%%%%%%%%%%%%%%%%%%%
%%%%%%%%%%%%%%%%%%%%%%%%%%%%%%%%%%%%
%%%%%%%%%%%%%%%%%%%%%%%%%%%%%%%%%%%%
%%%%%%%%%%%%%%%%%%%%%%%%%%%%%%%%%%%%
%%%%%%%%%%%%%%%%%%%%%%%%%%%%%%%%%%%%
\section*{Acknowledgement}
\noindent The authors are grateful to the research division of the University of Tehran
 for the grant which supported this research.

\noindent
{\small  
{\bf Fatemah Ayatollah Zadeh Shirazi},
Faculty of Mathematics, Statistics and Computer Science,
College of Science, University of Tehran,
Enghelab Ave., Tehran, Iran
\linebreak
({\it e-mail}: fatemah@khayam.ut.ac.ir)
\\
{\bf Reza Yaghmaeian},
Faculty of Mathematics, Statistics and Computer Science,
College of Science, University of Tehran,
Enghelab Ave., Tehran, Iran
({\it e-mail}: rezayaghma@yahoo.com)}
%%%%%%%%%%%%%%%%%%%%%%%%%%%%%%%%%%%%
%\[\underline{\SP\SP\SP\SP\SP\SP\SP\SP\SP\SP\SP\SP\SP\SP\SP\SP}\]


\begin{thebibliography}{99}

\bibitem{A} P. S. Alexandroff, {\it Diskrete Raume}, Mat. Sbornik
	(N. S.) (2), 501--5018, 1937.

\bibitem{AD} F. Ayatollah Zadeh Shirazi, D. Dikranjan, 
	{\it Set--theoretical entropy: A tool to compute topological
	entropy}, Proceedings ICTA 2011, Islamabad Pakistan, July 4--10,
	2011 (Cambridge Scientifique Publishers), 11--32, 2012.

\bibitem{AG} F. Ayatollah Zadeh Shirazi, N. Golestani,
	{\it Functional Alexandroff spaces}, Hacettepe Journal
	of Mathematics and Statistics (40, 4), 515--522, 2011.

\bibitem{AKS} F. Ayatollah Zadeh Shirazi, S. Karimzadeh Dolatabad,
	S. Shamloo, {\it Interaction between cellularity of
	Alexandroff spaces and entropy of generalized shift maps},
	Commentationes Mathematicae Universitatis Carolinae
	(Vol~27, no.~3), 397--410, 2016.

\bibitem{DG} D. Dikranjan, A. Giordano Bruno, {\it Topological entropy
	and algebraic entropy for group endomorphisms}, 
	Proceedings ICTA 2011, Islamabad Pakistan, July 4--10,
	2011 (Cambridge Scientifique Publishers), 133--214, 2012.

\bibitem{DGV} D. Dikranjan, A. Giordano Bruno, S. Virili, 
	{\it Strings of group endomorphisms}, Journal of Algebra and its
	Applications (9,~6), 933--958, 2010.

\bibitem{GV} A. Giordano Bruno, S. Virili, 
	{\it String numbers of abelian groups}, Journal of Algebra and its
	Applications (11,~4), 125--161, 2012.

\bibitem{L} M. Lal, {\it Iterates of the unitary totient function},
	Mathematics of Computation (Volume~28, Number~125), 
	301--302, 1974.

\bibitem{N} M. B. Nathanson, 
	{\it Elementary methods in number theory}, 
	Graduate Texts in Mathematics, 195, Springer-Verlag, New York, 2000

\bibitem{R} B. Richard, 
	{\it Principal topologies and transformation semigroups}, 
	Topology and its Applications (155, 15), 1644--1649, 2008.

\bibitem{SA} J. Sandor, 
	{\it The unitary totient maximum and minimum functions},
	STUDIA UNIV. ``BABES--BOLYAI'', MATHEMATICA (Volume L, Number 2),
	91--100, 2005.

\bibitem{S} D. Suryanarayana, 
	{\it A generalization of Dedekind's $\psi-$function},
	Math. Student (37), 81--86, 1969.

\bibitem{TV} S. Thajoddin, S. Vangipuram, 
	{\it A note on Jordan's totient function}, 
	Indian Journal of Pure and Applied Mathematics (9, no.~12),
	1156--1161, 1988.
\end{thebibliography}
\end{document}